\definecolor{indigo}{rgb}{0.29, 0.0, 0.51}
\definecolor{p1}{gray}{0.4}
\definecolor{p2}{gray}{0.6}
\definecolor{p3}{gray}{0.98}
\definecolor{p4}{gray}{0.8}
\definecolor{p5}{gray}{0.9}
\newcommand{\sing}{\operatorname{sing}}
\def\eps{\varepsilon}
\def\id{{\rm id\, }}
\newcommand{\m}{\mathcal{M}}
\def\n{{\mathcal N}}
\def\n{{\mathcal{M}}}
\def\n{{\mathcal N}}
\def\S{{\mathbb S}}
\newtheorem{theorem}{Theorem}
\newtheorem{corollary}[theorem]{Corollary}
\newtheorem{proposition}[theorem]{Proposition}
\newtheorem{openproblem}[theorem]{Open Problem}
\def\dist{{\rm dist\,}}
\newcommand{\dif}{\,\mathrm{d}}
\newcommand{\dx}{\dif x}
\newcommand{\R}{\mathbb{R}}
\newcommand{\brac}[1]{\left (#1 \right )}
\newcommand{\abs}[1]{\left |#1 \right |}
\newcommand{\norm}[1]{\left\|{#1}\right\|}
\newcommand{\barint}{
\rule[.036in]{.12in}{.009in}\kern-.16in \displaystyle\int }
\newcommand{\barcal}{\mbox{$ \rule[.036in]{.11in}{.007in}\kern-.128in\int $}}
\def\mvint_#1{\mathchoice
          {\mathop{\vrule width 6pt height 3 pt depth -2.5pt
                  \kern -8pt \intop}\nolimits_{\kern -3pt #1}}%
          {\mathop{\vrule width 5pt height 3 pt depth -2.6pt
                  \kern -6pt \intop}\nolimits_{#1}}%
          {\mathop{\vrule width 5pt height 3 pt depth -2.6pt
                  \kern -6pt \intop}\nolimits_{#1}}%
          {\mathop{\vrule width 5pt height 3 pt depth -2.6pt
                  \kern -6pt \intop}\nolimits_{#1}}}
\numberwithin{theorem}{section} \numberwithin{equation}{section}
\newcommand{\aleq}{\precsim}
\def\XXint#1#2#3{{\setbox0=\hbox{$#1{#2#3}{\int}$}
     \vcenter{\hbox{$#2#3$}}\kern-.5\wd0}}
\let\latexchi\chi
\renewcommand\chi{\@ifnextchar_\sub@chi\latexchi}
\newcommand{\sub@chi}[2]{% #1 is _, #2 is the subscript
  \@ifnextchar^{\subsup@chi{#2}}{\latexchi^{}_{#2}}%
}
\newcommand{\subsup@chi}[3]{% 
  \latexchi_{#1}^{#3}%
}
\DeclarePairedDelimiterX{\FromTo}[2]{\lparen}{\rparen}{\ifblank{#2}{#1}{#1;#2}}
\title[Non-uniqueness for minimizing harmonic maps]{Generic non-uniqueness of minimizing harmonic maps from a ball to a sphere}
\author{Antoine Detaille}
\address[Antoine Detaille]{
Universite Claude Bernard Lyon 1, ICJ UMR5208, CNRS, Ecole Centrale de Lyon, INSA Lyon, Université Jean Monnet, 
69622 Villeurbanne, 
France.}
\email{antoine.detaille@univ-lyon1.fr }
\author{Katarzyna Mazowiecka}
\address[Katarzyna Mazowiecka]{
Institute of Mathematics,%
University of Warsaw,
Banacha 2,
02-097 Warszawa, Poland}
\email{k.mazowiecka@mimuw.edu.pl}
\begin{document}

 \begin{abstract}
In this note, we study non-uniqueness for minimizing harmonic maps from $B^3$ to $\S^2$. We show that every boundary map can be modified to a boundary map that admits multiple minimizers of the Dirichlet energy by a small $W^{1,p}$-change for $p<2$.
This strengthens a remark by the second-named author and Strzelecki.
The main novel ingredient is a homotopy construction, which is the answer to an easier variant of a challenging question regarding the existence of a norm control for homotopies between \( W^{1,p} \) maps.

 \end{abstract}

 \keywords{Harmonic maps, homotopy theory}
\sloppy

\subjclass[2010]{58E20, 46E35}
\maketitle
\tableofcontents
\sloppy
\section{Introduction}
Minimizing harmonic maps from $B^3$ to $\S^2$ are defined as mappings with the least Dirichlet energy
\begin{equation}\label{eq:energy}
 E(u)\coloneqq \int_{B^3} |\nabla u|^2 \dx
\end{equation}
among maps $u\in W^{1,2}(B^3,\S^2)$ with fixed boundary datum $u\big\rvert_{\partial B^3} = \varphi\in W^{\frac 12,2}(\partial B^3,\S^2)$. 
Here, we minimize in the class of Sobolev maps with values in a manifold (in our case, a sphere); for $s>0$ and $p\ge1$, this space is defined as
\[
 W^{s,p}(\m,\n) \coloneqq \{ v\in W^{s,p}(\m,\R^L)\colon v(x) \in \n \text{ for a.e. } x\in \m\}\text{,}
\]
where $\n\subset \R^L$ is a Riemannian manifold embedded into $\R^L$ (in our case, $\n = \S^2$) and $\m$ is a compact Riemannian manifold (in our case, $\m = B^3$ or $\m = \S^2$).

The space \( W^{1,2}(B^3,\S^2) \) is not a linear space, but it is nevertheless a complete metric space endowed with the metric defined by
\[
	\dist{(u,v)} = \norm{u-v}_{W^{1,2}(B^{3})}\text{.}
\]
We emphasize that, although being a subset of it, the class \( W^{1,2}(B^3,\S^2) \) exhibits some striking qualitative differences with the linear space \(  W^{1,2}(B^3,\R^3) \).
For example, not every mapping $u \in W^{1,2}(B^3,\S^2)$ can be approximated by smooth maps $u_i\in C^\infty(B^3,\S^2)$ in the strong topology of $W^{1,2}$; see \cite[Section 4]{SU2}. However, maps $\varphi \in W^{1,2}(\S^2,\S^2)$ can be approximated in $W^{1,2}$ by smooth maps $\varphi_{i} \in C^\infty(\S^2,\S^2)$; see~\cite[Section 3]{SU1}.

For $\varphi \in W^{\frac12,2}(\partial B^3, \S^2)$, we also define the space
\[
  W^{1,2}_\varphi(B^3,\S^2) \coloneqq \{ v\in W^{1,2}(B^3,\S^2)\colon  v= \varphi \text{ on } \partial B^3 \text{ in the trace sense}\}
\]
and note that this space is always nonempty.
For instance, for a given smooth boundary datum $\varphi\in C^\infty(\partial B^3,\S^2)$, one can easily construct an extension $u\in W^{1,2}(B^3,\S^2)$ of \( \varphi \), simply by considering $u(x) = \varphi\bigl(\frac{x}{|x|}\bigr)$. 
More generally, any boundary map $\varphi \in W^{\frac 12,2}(\partial B^3,\S^2)$ admits an extension $u\in W^{1,2}(B^3,\S^2)$; see~\cite[Theorem 6.2]{HLp}.
Once again, we emphasize that this is not an immediate consequence of the analogue property of linear Sobolev spaces.
For example, there exists a boundary datum $\varphi \in W^{\frac 12,2}(\partial B^3,\S^1)$ which has \emph{no} extension $u\in W^{1,2}(B^3,\S^1)$; see~\cite[6.3]{HLp}.

Minimizing harmonic maps satisfy the following system of Euler--Lagrange  equations
\begin{equation}\label{eq:EL}
\left\{
\begin{array}{rcll}
-\Delta u & = & |\nabla u|^2 u\qquad&\mbox{in $B^3$,}\\
u & = &\varphi\qquad&\mbox{on $\partial B^3$}\text{.}
\end{array}
\right.
\end{equation}
It is known that for every non-constant boundary datum, the system \eqref{eq:EL} admits infinitely many solutions; see \cite{RivierePhd}. Minimizers of \eqref{eq:energy} are not the only solutions to \eqref{eq:EL} (see, e.g., \cite[Section 3]{HKLu}). However, even in the class of minimizing harmonic maps, we do not have uniqueness for a given boundary datum $\varphi\colon B^3 \to \S^2$; there are many known examples.
To list a few:
\begin{itemize}
 \item in \cite[Section 3]{HardtKinderlehrer}, there is an example of a planar boundary datum which admits two different minimizers, one with values on the southern hemisphere and the other one with values on the northern hemisphere;
 \item in \cite[2.2. Corollary]{HKL-thevariety88}, there is an example of a boundary datum for which there exists a 1-parameter family of distinct energy minimizing maps;
 \item in \cite[Section 5]{HardtLin89}, there is an example of a boundary map which serves as a boundary datum for at least two minimizers, one singular and the other one regular;
 \item in \cite[5.5 Theorem]{AL88}, there is an example of a boundary datum with mirror symmetry for which there are at least two different minimizers without the mirror symmetry.
\end{itemize}

Nevertheless, in the class of minimizing harmonic maps, we have the following \emph{generic uniqueness} result (\cite{AL88} attributes this theorem to Almgren).
\begin{theorem}[{\cite[Theorem 4.1]{AL88}}]\label{th:ALuniqueness}
 Let $\varphi \in W^{1,2}(\S^2,\S^2)$. For every $\eps>0$, there exists $\psi\in W^{1,2}(\S^2,\S^2)$ such that $\norm{\varphi - \psi}_{W^{1,2}(\S^2)} < \eps$ and for which there exists exactly one energy minimizer $u\colon B^3 \to \S^2$ having boundary datum $\psi$. Moreover, $\psi$ coincides with $\varphi$ outside of $B_\eps(x)\cap \S^2$, for some $x\in \S^2$.
\end{theorem}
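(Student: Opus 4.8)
\emph{Strategy.} The statement is the prototype of a ``generic uniqueness via a concave value function'' result, and I would prove it along those lines. Fix a point $x_0\in\S^2=\partial B^3$. Since $\varphi\in W^{1,2}(\S^2,\S^2)$, a preliminary modification supported in an arbitrarily small cap around $x_0$ and arbitrarily small in $W^{1,2}(\S^2)$ (mollify $\varphi$ on a cap where $\int|\nabla_\partial\varphi|^2$ is already negligible and project the result back onto $\S^2$) lets us assume $\varphi$ is smooth near $x_0$. Fix $\rho\le\eps$ so small that $\int_{\Gamma}|\nabla_\partial\varphi|^2\,\mathrm d\mathcal H^2$ is as small as we please, where $\Gamma:=B_\rho(x_0)\cap\partial B^3$. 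For a ``boundary force'' $h\in L^\infty(\Gamma,\R^3)$ consider the partially free problem
\[
	\mu(h):=\inf\Bigl\{E(v)+\int_\Gamma h\cdot v\,\mathrm d\mathcal H^2\ :\ v\in\mathcal A\Bigr\},\qquad
	\mathcal A:=\bigl\{v\in W^{1,2}(B^3,\S^2):v=\varphi\text{ on }\partial B^3\setminus\Gamma\bigr\}.
\]
Because $|v|\equiv1$ the boundary term is bounded by $\mathcal H^2(\Gamma)\norm{h}_{L^\infty}$, so minimizing sequences are bounded in $W^{1,2}$; the usual lower semicontinuity of $E$, compactness of the trace, and the fact that a.e. limits preserve both the constraint $|v|=1$ and the condition on $\partial B^3\setminus\Gamma$ yield a nonempty set $M(h)$ of minimizers. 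The key elementary observation is that each $v\in M(h)$ is automatically a minimizer of the \emph{plain} Dirichlet energy among all maps with boundary datum $\psi_h^v:=\varphi$ on $\partial B^3\setminus\Gamma$ and $\psi_h^v:=\operatorname{tr}_\Gamma v$ on $\Gamma$ (fixing the trace on $\Gamma$ turns the linear term into a constant), and conversely every $E$-minimizer with that datum lies in $M(h)$; hence, once we know all elements of $M(h)$ share a common trace on $\Gamma$, $M(h)$ is \emph{exactly} the set of energy minimizers for the resulting datum $\psi_h$, and proving $\#M(h)=1$ proves the theorem for $\psi=\psi_h$, which coincides with $\varphi$ outside $\Gamma\subset B_\eps(x_0)\cap\S^2$.

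\emph{Concavity and generic differentiability.} Written as an infimum of affine functions of $h$, the value $\mu$ is concave, and the bound above makes it Lipschitz, hence continuous, on $C(\overline\Gamma,\R^3)$. By Mazur's theorem, a continuous concave function on a separable Banach space is G\^ateaux differentiable on a dense $G_\delta$ set; in particular there are forces $h$ of arbitrarily small norm at which $\mu$ is G\^ateaux differentiable. At such an $h$, if $v_0,v_1\in M(h)$, then for every smooth $k\colon\overline\Gamma\to\R^3$ the one-variable concave function $s\mapsto\mu(h+sk)$ lies below both affine functions $s\mapsto E(v_j)+\int_\Gamma(h+sk)\cdot v_j$ ($j=0,1$) and agrees with them at $s=0$; differentiability forces $\int_\Gamma k\cdot\operatorname{tr}_\Gamma v_0=\int_\Gamma k\cdot\operatorname{tr}_\Gamma v_1$ for all $k$, i.e.\ $\operatorname{tr}_\Gamma v_0=\operatorname{tr}_\Gamma v_1$. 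Thus at a generic small force all minimizers share one boundary datum $\psi_h$ (which, for $h$ and $\rho$ small, is $W^{1,2}(\S^2)$-close to $\varphi$ — see below); it remains to see that they coincide in the interior.

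\emph{From equal traces to equality.} Each $v\in M(h)$ also satisfies the natural boundary condition on $\Gamma$ produced by the free part of the first variation: since $v\cdot\partial_\nu v=0$ (differentiate $|v|^2\equiv1$ in the normal direction, legitimate by boundary regularity), the Euler--Lagrange analysis gives that $\partial_\nu v$ on $\Gamma$ equals the component of $h$ tangent to $\S^2$ along $v$, up to a fixed constant. As all minimizers now have the \emph{same} trace $\psi_h$ on $\Gamma$, they have the same tangent plane there and hence the same Neumann datum $\partial_\nu v$ on $\Gamma$. Therefore any two elements of $M(h)$ are harmonic maps agreeing, together with their normal derivatives, on the open boundary patch $\Gamma$; invoking boundary regularity for the partially free problem with smooth data — legitimate since $\varphi$ is smooth near $\overline\Gamma$ and $h$ is smooth, so $\psi_h$ is smooth near $x_0$ and each minimizer is smooth up to $\Gamma$ away from at most finitely many points — together with unique continuation, they coincide. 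Hence $\#M(h)=1$, and $\psi=\psi_h$ is the desired boundary datum.

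\emph{Main obstacle.} I expect the analytic heart to be exactly the passage ``equal trace $\Rightarrow$ equal map'': one needs boundary regularity and unique continuation to be genuinely available for these partially free minimizers, which forces the preliminary smoothing of $\varphi$ near $x_0$ and some care at the (at most finite) boundary singular set — one may have to argue unique continuation from the regular portion of $\Gamma$ and then use continuity of the maps. A second, less conceptual but still real point is the quantitative one: that $\norm{\psi_h-\varphi}_{W^{1,2}(\S^2)}<\eps$ can actually be achieved, equivalently that $\int_\Gamma|\nabla_\partial\operatorname{tr}_\Gamma v_h|^2$ is small; I would obtain this from smallness of the energy $E(v_h;B_\rho(x_0)\cap B^3)$ — got by comparison with a fixed energy minimizer for $\varphi$, after choosing $x_0$ away from its singular set — combined with boundary $\varepsilon$-regularity, which controls $\operatorname{tr}_\Gamma v_h$ in $C^1(\Gamma)$ and hence its tangential energy.
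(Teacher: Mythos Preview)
The paper does not contain a proof of this statement: Theorem~\ref{th:ALuniqueness} is quoted from Almgren--Lieb~\cite[Theorem~4.1]{AL88} and used as a black box in the proof of Theorem~\ref{th:main}. There is therefore nothing in the present paper to compare your argument against.

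That said, your outline is essentially the Almgren--Lieb argument itself. Their proof proceeds exactly by freeing the boundary datum on a small cap, adding a linear forcing term, observing that the resulting value function is concave (hence G\^ateaux differentiable on a dense set in a separable Banach space), and deducing at a point of differentiability that all minimizers share the same trace on the free cap; the passage from equal Cauchy data on the cap to global equality is then done via unique continuation for harmonic maps. So you have correctly reconstructed the original strategy.

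The two places you flag as obstacles are indeed the ones that carry the work in~\cite{AL88}. For the unique continuation step, Almgren--Lieb rely on boundary regularity for minimizers with the \emph{Dirichlet} datum $\psi_h$ (once the trace is pinned down, the partially free minimizer is an ordinary Dirichlet minimizer for $\psi_h$) together with real-analyticity of harmonic maps where smooth, rather than a free-boundary regularity theory; you might find that route cleaner than invoking regularity for the Neumann-type problem directly. For the $W^{1,2}$-smallness of $\psi_h-\varphi$, your plan (comparison with a fixed minimizer for $\varphi$ near a regular boundary point plus $\varepsilon$-regularity) is along the right lines, and the argument in~\cite{AL88} is of this flavor. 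One small warning: your claim that $\partial_\nu v$ on $\Gamma$ equals the tangential part of $h$ ``up to a fixed constant'' is not quite right --- the natural boundary condition is $\partial_\nu v = -\tfrac12\,(h - (h\cdot v)v)$, which depends on $v$ through the projection; it is only after you know the traces coincide that this becomes the same Neumann datum for all minimizers, which is how you are using it anyway.
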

In \cite{MazowieckaStrzelecki}, the second-named author and Strzelecki suspected that \emph{generic non-uniqueness} occurs, when taking into account small perturbation of the boundary datum in the topology of the space $W^{1,p}$ for $p<2$. The main result of this note is the strengthening of \cite[Remark 4.1]{MazowieckaStrzelecki}.
\begin{theorem}\label{th:main}
Let $\varphi \in C^\infty(\S^2,\S^2)$. For every $\eps>0$, there exists $\psi\in C^\infty(\S^2,\S^2)$ such that $\norm{\varphi-\psi}_{W^{1,p}(\S^2,\S^2)}<\eps$ which serves as a boundary datum for at least two energy minimizing maps from $B^3$ to $\S^2$ having a different number of singularities.
\end{theorem}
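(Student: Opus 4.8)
\emph{Proof strategy and the gadget.} The plan is to leave $\varphi$ untouched outside an arbitrarily small spherical cap and, inside that cap, to graft a fixed local configuration that forces two minimizers with a different number of singularities, the whole modification being $W^{1,p}$-small precisely because $p<2$. The input we need is a \emph{localized non-uniqueness gadget}: a smooth map $\omega_0\colon\S^2\to\S^2$, equal to a constant $q$ outside some cap $V_0\subsetneq\S^2$, for which the Dirichlet energy on $B^3$ with boundary datum $\omega_0$ has at least two minimizers, one regular and one singular (the latter with at least two singularities, since the existence of a regular minimizer forces $\deg\omega_0=0$). Such an $\omega_0$ can be produced by a dipole-type construction: a $+1$ and a $-1$ fully developed bubble placed at a tunable distance, all else constant equal to $q$. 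For well-separated bubbles every minimizer resolves each one by an interior point singularity; for sufficiently close bubbles a minimizer annihilates the dipole regularly; and at the threshold distance -- which exists by the continuity of the minimal energy in the boundary datum, \cite[Section~4]{AL88} -- both behaviours are realized by global minimizers. Alternatively one localizes the example of \cite[Section~5]{HardtLin89}. The feature to retain is that $\omega_0$ is constant near the boundary of its active region, which is what makes the grafting inexpensive.

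\emph{The grafting homotopy.} Fix $a\in\S^2$, set $q\coloneqq\varphi(a)$, and for small $r>0$ let $C'_r\subset C_r$ be concentric caps about $a$ of radii $\sim r/2$ and $\sim r$. Define $\psi_r$ to be $\varphi$ on $\S^2\setminus C_r$; on $C'_r$ the rescaled copy of $\omega_0$ carrying $V_0$ into $C'_r$ (so that $\psi_r\equiv q$ near $\partial C'_r$ and the dipole is reproduced at scale $\sim r$); and on the collar $C_r\setminus C'_r$ the homotopy between $\varphi|_{\partial C_r}$ and the constant $q$ obtained by radial linear interpolation followed by nearest-point projection onto $\S^2$ -- admissible because $\varphi$ stays within $O(r)$ of $q=\varphi(a)$ on $C_r$ -- after which one smooths the matchings to get $\psi_r\in C^\infty(\S^2,\S^2)$. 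The crucial estimate is $\norm{\psi_r-\varphi}_{W^{1,p}(\S^2)}\to0$ as $r\to0$: the difference is supported in $C_r$, where the $L^p$ part is $\lesssim\abs{C_r}^{1/p}\lesssim r^{2/p}$; on the collar $\abs{\nabla\psi_r}=O(1)$, contributing $\lesssim r^{2/p}$; and on $C'_r$ the rescaling yields $\int_{C'_r}\abs{\nabla\psi_r}^p\lesssim r^{2-p}\norm{\nabla\omega_0}_{L^p(\S^2)}^p$. Each term tends to $0$ exactly because $p<2$; for $p=2$ one has instead $\int_{C'_r}\abs{\nabla\psi_r}^2\approx\int_{\S^2}\abs{\nabla\omega_0}^2\not\to0$, in keeping with the fact that the whole strategy must break down at $p=2$ by \Cref{th:ALuniqueness}. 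Abstractly this realizes a homotopy between two $W^{1,p}$ maps with controlled $W^{1,p}$ norm in the benign situation where one endpoint is a constant; the subcriticality $p<2$ is what lets the homotopy carry along a topologically non-trivial local configuration at vanishing cost.

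\emph{Transfer of non-uniqueness and conclusion.} Fix a minimizer $u_\varphi$ of the $\varphi$-problem; since $\varphi$ is smooth, $u_\varphi$ is regular near $\partial B^3$, so its singular set stays at a fixed positive distance from $a$. Gluing onto $u_\varphi$, in a small half-ball of $B^3$ about $a$, a rescaled copy of a minimizer of the $\omega_0$-problem through the collar produces, for each of the two gadget minimizers, a competitor for $\psi_r$; arranging the two competitors to coincide outside the innermost ``gadget'' region and to restrict there to the two minimizers of a tuned local filling problem, they have the same energy, and $\inf E_{\psi_r}\le E(u_\varphi)+c_r$ with $c_r\to0$. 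Conversely, by the interior $\varepsilon$-regularity, monotonicity and compactness theory for minimizing harmonic maps (\cite[Section~4]{AL88} and the references therein), as $r\to0$ a minimizer of $\psi_r$ blows up near $a$ to a minimizer of the $\omega_0$-problem and converges away from $a$ to a minimizer of the $\varphi$-problem, which matches the upper bound and forces its singular set to be, up to the vanishing collar, the singular set of $u_\varphi$ together with that of a gadget minimizer. Running the tuning of the first step ``in place'' -- letting the internal parameter of the grafted gadget vary while keeping $\psi_r$ within $W^{1,p}$-distance $\eps$ of $\varphi$ -- the energies of the ``regular'' and ``singular'' local branches cross, and at a threshold value of the parameter both are global minimizers of $\psi_r$ for $r$ small, with singular sets differing by at least two points. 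Choosing $r$ small and the parameter at this threshold, $\psi\coloneqq\psi_r\in C^\infty(\S^2,\S^2)$ satisfies $\norm{\varphi-\psi}_{W^{1,p}(\S^2)}<\eps$ and admits at least two energy minimizing maps $B^3\to\S^2$ with a different number of singularities.

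\emph{Main obstacle.} The genuinely new point is the grafting homotopy of the second step: turning it into an honest smooth homotopy of vanishing $W^{1,p}$ cost while still importing a full $\pm1$ bubble pair, which is exactly the ``norm control for homotopies'' one gains from $p<2$. Closely intertwined with it, the localization in the third step must be carried out carefully enough that the non-uniqueness of the gadget, and the gap between the two singularity counts, genuinely survive passage to minimizers of the perturbed problem and are not washed out by interaction between the macroscopic scale of $\varphi$ and the shrinking scale $r$ of the grafted gadget.
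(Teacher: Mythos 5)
Your grafting estimate (the modification is $W^{1,p}$-cheap because $p<2$) is sound and is essentially the same mechanism as the paper's rescaled homotopy (\Cref{proposition:controlled_homotopies}), but the two steps where the real analytic content lies are asserted rather than proved. First, the gadget: the statement that for well-separated bubbles \emph{every} minimizer must resolve each bubble by an interior singularity is not a routine dipole computation --- it is precisely the Lavrentiev-gap input that the paper imports as a black box from \cite[Theorem 2.3.1]{MazowieckaPhD} (cf.\ \cite{MazowieckaStrzelecki}, \cite{HardtLin89}). And the claim that at the threshold separation \emph{both} behaviours are realized by global minimizers does not follow from ``continuity of the minimal energy in the boundary datum'': to get two distinct minimizers at the threshold you must take minimizers for parameters approaching it from both sides, pass to strong $W^{1,2}$ limits \cite[Theorem 1.2(4)]{AL88}, and show that the singularity counts persist in the limit (singular points converge to singular points, no escape to the boundary by uniform boundary regularity, no merging by the uniform separation of singularities \cite[Theorem 2.1]{AL88}). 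That compactness argument is the heart of Hardt--Lin's example and of the paper's Section 3; your proposal assumes its conclusion.

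The transfer step is the more serious gap. (i) Your ``tuning in place'' needs to know that, for gadget parameters on each side of the crossing, \emph{every} global minimizer of the $\psi_r$-problem has the corresponding local behaviour; you only build competitors, which bounds $\inf E_{\psi_r}$ from above but says nothing about what minimizers actually do. The paper secures the analogous dichotomy by first replacing $\varphi$ by a nearby datum with a \emph{unique} minimizer via \Cref{th:ALuniqueness} (uniqueness is what makes the Hardt--Lin stability theorem applicable and gives $\tau\in(0,1)$), and by using a second datum whose unique minimizer has strictly more singularities; nothing in your construction plays this role. (ii) The two-scale decoupling --- that a minimizer of the $\psi_r$-problem blows up near $a$ to a minimizer of the $\omega_0$-problem, converges elsewhere to a minimizer of the $\varphi$-problem, and has singular set equal to the union of the two --- is not a direct consequence of the cited $\eps$-regularity/monotonicity/compactness theory, and at the crossing the two local branches have \emph{equal} energy, so the $o(1)$ interaction between the macroscopic scale and the scale $r$ can a priori tip the balance and destroy the exact coexistence you need. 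Moreover, two distinct global minimizers for $\psi_r$ need not coincide away from the grafted cap, so even granting coexistence you have not shown their \emph{total} numbers of singularities differ. The paper's route avoids all of this by deforming along a single homotopy, controlled in $W^{1,p}$ by \Cref{corollary:small_homotopies}, between two boundary data each admitting a unique minimizer with known singularity count, and running the compactness argument only in the homotopy parameter.
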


Otherwise stated, \Cref{th:main} asserts that boundary data for which non-uniqueness occurs are dense in \( W^{1,p}(\S^{2},\S^{2}) \).
This strengthens~\cite[Section 5]{HardtLin89} and \cite[Remark~4.1]{MazowieckaStrzelecki}, which provide existence of \emph{one} boundary map for which non-uniqueness occurs.
To be precise, as it is stated, Theorem~\ref{th:main} only asserts that boundary data subjected to non-uniqueness are dense in \( C^{\infty}(\S^{2},\S^{2}) \) with respect to the \( W^{1,p} \) topology.
In turn, \( C^{\infty}(\S^{2},\S^{2}) \) is dense in \( W^{1,p}(\S^{2},\S^{2}) \) (see e.g.~\cite[Theorem~1]{BethuelZheng1988}), which ensures the density of boundary data for which non-uniqueness occurs in the whole \( W^{1,p}(\S^{2},\S^{2}) \).

Both \Cref{th:ALuniqueness} and \Cref{th:main} are in line with the \emph{stability} results: 
On one hand, it is known that small perturbations of boundary data (for which there is a unique minimizer) in the $W^{1,2}$ norm do not change the number of singularities for corresponding minimizers (see \cite{HardtLin89} for perturbations in the $W^{1,\infty}$ norm, \cite{MMS} and \cite{SiranLi} for perturbations in the $W^{1,2}$ norm). 
On the other hand, small perturbations of the boundary datum in the $W^{1,p}$ norm for $p<2$ can change the number of singularities for corresponding minimizers \cite{MazowieckaStrzelecki}.

We prove \Cref{th:main} in \Cref{s:proof}. 
To do so, roughly speaking, we follow an example by Hardt--Lin \cite[Section 5]{HardtLin89}. 
We start with any smooth boundary datum and use the construction of a boundary map (homotopic to the original one) of \cite{MazowieckaStrzelecki} (see \cite{MazowieckaPhD} for necessary modifications) for which a \emph{Lavrentiev gap phenomenon} occurs. 
In \Cref{s:homotopy}, we show that a homotopy between these two maps can be chosen small in $W^{1,p}$-norm for $p<2$, which is the novelty of this note, and prove that within this homotopy, there is a boundary datum with the required properties.

As we explained, our key contribution in this note, which allows the transition from the existence to the density of boundary data where non-uniqueness occurs, is the homotopy construction presented in \Cref{s:homotopy}.
We conclude this introduction with some extra comments concerning this construction.

Assume that one is given \( 1 \leq p < 2 \) and two maps \( \varphi \) and \( \psi \in C^{\infty}(\S^{2},\S^{2}) \) that have the same topological degree.
Therefore, there exists a continuous, and even smooth homotopy connecting \( \varphi \) to \( \psi \).
A natural question is whether or not, knowing that \( \varphi \) and \( \psi \) are close with respect to the \( W^{1,p} \) distance, one can choose the homotopy between \( \varphi \) and \( \psi \) to remain close to \( \varphi \) and \( \psi \) all along the deformation.
More precisely, one could for instance expect that there exists a constant \( C > 0 \) depending on \( p \) such that a homotopy \( H \in C^{\infty}(\S^{2} \times [0,1],\S^{2}) \) between \( \varphi \) and \( \psi \) can be chosen so that
\begin{equation}
\label{eq:uniform_control_homotopy}
	\norm{\varphi - H_{t}}_{W^{1,p}(\S^{2})} \leq C\norm{\varphi - \psi}_{W^{1,p}(\S^{2})}
	\quad
	\text{for every \( 0 \leq t \leq 1\).}
\end{equation}
Here, \( H_{t} \) stands for the map \( H(\cdot,t) \).
The question is already interesting if we assume in addition that \( \varphi \) and \( \psi \) coincide outside of a small disk.
For instance, one could ask whether or not a homotopy such that~\eqref{eq:uniform_control_homotopy} holds can be found under the additional assumption that \( \varphi = \psi \) outside of a ball of radius \( r \), for some \( r > 0 \) sufficiently small, possibly depending on the map \( \varphi \) that would be fixed in advance.

We are not able to solve this question, and a precise statement of the problem in a more general context is given as~\Cref{open_problem:small_homotopy}.
However, we are able to solve a weaker version of this problem, which is nevertheless sufficient for our purposes.
Namely, we prove that, if the maps \( \varphi \) and \( \psi \) coincide outside of a small ball, then a smooth homotopy between them can be found such that \( \norm{\varphi - H_{t}}_{W^{1,p}(\S^{2})} \) is controlled, not by the distance between \( \varphi \) and \( \psi \), but by the sum of their norms on a neighborhood of the region where they differ.
This is the content of the main result of~\Cref{s:homotopy},~\Cref{proposition:controlled_homotopies}.
This allows us to deduce that, for a fixed \( \varphi \) and a given \( \varepsilon > 0 \), one can choose the radius \( r > 0 \) sufficiently small such that, for any map \( \psi \) sufficiently close to \( \varphi \) such that \( \varphi = \psi \) outside of \( B_{r}(x) \), a homotopy \( H \) connecting \( \varphi \) to \( \psi \) can be found such that
\[
	\norm{\varphi - H_{t}}_{W^{1,p}(\S^{2})}
	\leq
	\varepsilon
	\quad
	\text{for every \( 0 \leq t \leq 1 \);}
\]
see~\Cref{corollary:small_homotopies}.
This is sufficient to prove our main result,~\Cref{th:main}, but does not solve~\Cref{open_problem:small_homotopy}, as in our proof the radius \( r > 0 \) of the ball outside of which the maps \( \varphi \) and \( \psi \) are required to coincide has to depend on \( \varepsilon \), ruling out the possibility of controlling \( \norm{\varphi - H_{t}}_{W^{1,p}(\S^{2})} \) uniformly in \( t \) solely by \( \norm{\varphi - \psi}_{W^{1,p}(\S^{2})} \) with our argument.

{\bf Notation.} We denote by $B^3$ a Euclidean unit ball in $\R^3$. We will write $\S^n$ for the unit $n$-dimensional sphere. For a point $x\in \S^n$ and $r>0$, we will write $B_r(x)$ for a geodesic ball of radius $r$ around $x$. We will write $A \aleq B$ whenever there is a constant $C$ (independent of all crucial quantities) such that $A\le C B$. Throughout this paper, the term \emph{minimizer} will always refer to an $\S^2$-valued mapping minimizing the Dirichlet energy with given boundary datum.

{\bf Acknowledgments.}
\begin{itemize}
 \item The project is co-financed by the Polish National Agency for Academic Exchange within Polish Returns Programme - BPN/PPO/2021/1/00019/U/00001 (KM).
 \item The project is co-financed by National Science Centre, Poland grant 2022/01/1/ST1/00021 (KM).
 \item The project was initiated while AD was visiting the Institute of Mathematics of the University of Warsaw. He really thanks them for their hospitality.
\end{itemize}

\section{Homotopy construction}\label{s:homotopy}
We will assume in this section that $\n$ is a (non necessarily compact) Riemannian manifold.
We work on the sphere $ \S^n $, but the result may be readily extended to an arbitrary domain, either an open subset of $ \R^n $ or a Riemannian manifold $ \m $ of dimension $ n $.
We also always assume that $ p < n $.

\begin{proposition}
\label{proposition:controlled_homotopies}
	Let $ \varphi \in C^{\infty}(\S^n,\n)$ and $p<n$.
	For every $ r > 0 $, for every $ x \in \S^n $, and every $ \psi \in C^{\infty}(\S^n,\n)$ homotopic to $ \varphi $ and satisfying $ \varphi = \psi $ on $ \S^n \setminus B_r(x)  $, there exists a homotopy $ H \in C^{\infty}(\S^n \times [0,1],\n)$ from $ \varphi $ to $ \psi $ such that
	\[
		\sup_{0 \leq t \leq 1} \norm{\varphi - H_{t}}_{W^{1,p}(\S^n)}
		\leq
		C\brac{\norm{\varphi}_{W^{1,p}(B_{2r}(x))} + \norm{\psi}_{W^{1,p}(B_{2r}(x))}}\text{,}
	\]
	for some constant $ C > 0 $ depending only on $ n $ and $ p $.
\end{proposition}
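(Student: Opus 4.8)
The plan is to localize the whole problem to a single Euclidean ball, and then to realize the (a priori uncontrolled) homotopy between $\varphi$ and $\psi$ at such a small scale that, thanks to $p<n$, its $W^{1,p}$-norm becomes negligible. Concretely, using the exponential chart at $x$ I would first reduce to the situation where the domain is $\R^n$, the balls $B_r(x)\subset B_{2r}(x)$ are concentric Euclidean balls $B_r\subset B_{2r}$, and $\varphi=\psi$ on $\R^n\setminus B_r$; a bi-Lipschitz change of variables distorts all the $W^{1,p}$-norms only by a factor $C(n)$. Since $\varphi$ and $\psi$ are homotopic and coincide outside $B_r$, a standard obstruction-theoretic argument (the relative homotopy classes of extensions of $\varphi|_{\partial B_r}$ over $\overline{B_r}$ form a torsor under $\pi_n(\n)$, and the global homotopy kills the difference class of $\varphi|_{\overline{B_r}}$ and $\psi|_{\overline{B_r}}$) shows that $\varphi|_{\overline{B_r}}$ and $\psi|_{\overline{B_r}}$ are homotopic relative to $\partial B_r$; fix a smooth such homotopy $K\in C^\infty(\overline{B_r}\times[0,1],\n)$, which we may take to agree with $\varphi$ to infinite order near $\partial B_r$ for every time.

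Next, by Fubini's theorem I would fix a ``good radius'' $\rho_0\in(r,\tfrac32 r)$ such that $\norm{\nabla\varphi}_{L^p(\partial B_{\rho_0})}^p+\norm{\nabla\psi}_{L^p(\partial B_{\rho_0})}^p\lesssim r^{-1}\bigl(\norm{\nabla\varphi}_{L^p(B_{2r})}^p+\norm{\nabla\psi}_{L^p(B_{2r})}^p\bigr)$; since $\rho_0>r$ we still have $\varphi=\psi$ on $B_{\rho_0}\setminus B_r$, so extending $K$ by $\varphi$ on that annulus produces a smooth homotopy $\widetilde K$ on $\overline{B_{\rho_0}}$ from $\varphi|_{\overline{B_{\rho_0}}}$ to $\psi|_{\overline{B_{\rho_0}}}$ relative to $\partial B_{\rho_0}$. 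Now, for a small parameter $\lambda\in(0,1)$ to be chosen at the very end, let $\Theta_\lambda$ be a radial diffeomorphism of $B_{2r}$, equal to the identity near $\partial B_{2r}$, which dilates $B_{\lambda\rho_0}$ onto $B_{\rho_0}$ by the factor $1/\lambda$ and maps $\{\lambda\rho_0\le|y|\le 2r\}$ onto $\{\rho_0\le|y|\le 2r\}$ by an essentially affine radial map (so that, since $\rho_0<2r$, it is radially compressing on that outer shell). I would then take $H$ to be the concatenation, reparametrized in $t$ and smoothed at the junctions, of: (I) deforming $\varphi$ to $\varphi\circ\Theta_\lambda$ by precomposing with a smooth isotopy from the identity to $\Theta_\lambda$; (II) on the tiny ball $B_{\lambda\rho_0}$, where the map currently equals $\varphi(\,\cdot\,/\lambda)=\widetilde K(\,\cdot\,/\lambda,0)$, replacing $\widetilde K(\,\cdot\,/\lambda,0)$ by $\widetilde K(\,\cdot\,/\lambda,s)$ as $s$ runs from $0$ to $1$, keeping the map equal to $\varphi\circ\Theta_\lambda$ elsewhere (legitimate since $\widetilde K(\,\cdot\,,s)=\varphi$ on $\partial B_{\rho_0}$ for all $s$); and (III) deforming the resulting map back to $\psi$ — which works because, at the end of (II), the map is $\psi\circ\Theta_\lambda$, using $\widetilde K(\,\cdot\,,1)=\psi$ and $\varphi=\psi$ on $\{\rho_0\le|y|\le 2r\}$ — by the reverse of (I) applied to $\psi$. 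Every $H_t$ equals $\varphi$ outside $B_{2r}$, hence $\norm{\varphi-H_t}_{W^{1,p}(\S^n)}\le\norm{\varphi}_{W^{1,p}(B_{2r})}+\norm{H_t}_{W^{1,p}(B_{2r})}$, and it remains to bound $\norm{H_t}_{W^{1,p}(B_{2r})}$.

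For the pieces of the form $f\circ\Theta$ appearing in (I) and (III) (with $f\in\{\varphi,\psi\}$ and $\Theta$ one of the interpolating diffeomorphisms), one splits $B_{2r}$ into the part where $\Theta$ dilates a small ball onto $B_{\rho_0}$ — there $f\circ\Theta$ is a rescaling of $f|_{B_{\rho_0}}$ into a smaller ball, so its $W^{1,p}$-norm is $\le\norm{f}_{W^{1,p}(B_{\rho_0})}$ because $p<n$ — and the outer shell, where $\Theta$ is radially compressing. On the shell the only delicate contribution is the tangential one, of the form $\int\bigl(|\Theta(y)|/|y|\bigr)^p\,|\nabla_T f(\Theta(y))|^p\dd y$, which after the substitution $z=\Theta(y)$ and a radial integration is controlled by $\rho_0^{\,p}\bigl(\int_0^{2r}\rho^{\,n-1-p}\dd\rho\bigr)\norm{\nabla_T f}_{L^p(\partial B_{\rho_0})}^p\lesssim\norm{\nabla f}_{L^p(B_{2r})}^p$, where $\int_0^{2r}\rho^{\,n-1-p}\dd\rho<\infty$ precisely because $p<n$, and the trace norm on $\partial B_{\rho_0}$ is absorbed thanks to the good radius. (A slightly more careful splitting of the shell covers the whole range $p<n$.) Finally, the piece introduced in (II) is $\widetilde K(\,\cdot\,/\lambda,s)$ on $B_{\lambda\rho_0}$, whose $W^{1,p}(B_{\lambda\rho_0})$-norm equals $\lambda^{(n-p)/p}\norm{\nabla\widetilde K(\,\cdot\,,s)}_{L^p(B_{\rho_0})}+\lambda^{n/p}\norm{\widetilde K(\,\cdot\,,s)}_{L^p(B_{\rho_0})}$, which tends to $0$ as $\lambda\to0$ uniformly in $s$; one now chooses $\lambda$ small — it may depend on $\varphi,\psi,K,r$, which is harmless since the constant $C$ only needs to depend on $n$ and $p$ — so that this quantity is $\le\norm{\varphi}_{W^{1,p}(B_{2r})}+\norm{\psi}_{W^{1,p}(B_{2r})}$. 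Adding up the three contributions yields $\sup_t\norm{\varphi-H_t}_{W^{1,p}(\S^n)}\le C(n,p)\bigl(\norm{\varphi}_{W^{1,p}(B_{2r})}+\norm{\psi}_{W^{1,p}(B_{2r})}\bigr)$.

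The main difficulty — and the reason a uniform bound by $\norm{\varphi-\psi}_{W^{1,p}}$ seems out of reach by this method — lies in getting genuine $W^{1,p}$-control (rather than merely $L^\infty$-control) of the reparametrized pieces $f\circ\Theta$ in the region where $\Theta$ must stretch a sphere of small radius onto $\partial B_{\rho_0}$: there the tangential part of $\nabla f$ gets amplified, and a crude estimate only bounds the resulting integral by a power of $r$ times $\norm{\nabla f}_{L^\infty}$. The two devices that circumvent this are the Fubini selection of a good radius $\rho_0$ (turning the offending boundary trace into something controlled by $\norm{\nabla f}_{L^p(B_{2r})}$) and the integrability $\int_0^{2r}\rho^{\,n-1-p}\dd\rho<\infty$ forced by $p<n$. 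Thus $p<n$ is used twice: once to shrink the uncontrolled homotopy $K$ down to negligible size, and once to keep the reparametrization that makes room for it from costing an unbounded amount of energy.
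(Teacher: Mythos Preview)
Your approach is essentially the paper's: a three–phase homotopy (shrink $\varphi$ by a radial diffeomorphism, run an arbitrary homotopy $G$ at a tiny scale, unshrink to $\psi$), with the shrinking parameter $\lambda$ (the paper's $\tau$) chosen at the very end so that the subcritical scaling $p<n$ makes the middle phase negligible. You are more explicit than the paper on two points: (i) why the initial homotopy can be taken relative to $\partial B_r$ (your obstruction-theory remark; the paper simply writes ``we may assume that $G$ is stationary outside of $B_r(x)$''); and (ii) a Fubini ``good radius'' $\rho_0$ to control the tangential blow-up on the shell. Point~(ii) is a genuine refinement: the paper's one-line appeal to ``the change of variable theorem combined with $n-p>0$'' is in fact delicate in the range $n-1<p<n$, since the tangential dilation of $\Phi_t$ near the inner edge of the shell is of order $r/\tau$ and cannot be absorbed by the Jacobian alone.

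There is, however, a mismatch between the diffeomorphism you describe and the estimate you claim. With the \emph{affine} shell map you wrote, $\Theta_\lambda$ sends $\{\lambda\rho_0\le|y|\le 2r\}$ onto the full annulus $\{\rho_0\le|z|\le 2r\}$, so after the substitution $z=\Theta_\lambda(y)$ the tangential integrand involves $|\nabla_T f(z)|$ for all $|z|\in[\rho_0,2r]$, not only on $\partial B_{\rho_0}$; hence the bound $\rho_0^{\,p}\bigl(\int_0^{2r}\rho^{\,n-1-p}\,d\rho\bigr)\|\nabla_T f\|_{L^p(\partial B_{\rho_0})}^p$ does not follow. The clean fix --- which your parenthetical ``a slightly more careful splitting of the shell'' is presumably pointing to --- is to modify the shell part of the isotopy so that on the inner portion it collapses radially onto the good sphere, i.e.\ on $\{\lambda\rho_0\le|y|\le\rho_0\}$ set $(f\circ\Theta)(y)=f(\rho_0\,y/|y|)$ (and keep $\Theta=\mathrm{id}$ on $\{|y|\ge\rho_0\}$). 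Then only the trace of $\nabla_T f$ on $\partial B_{\rho_0}$ enters, and your written estimate holds verbatim, using exactly $\int_0^{\rho_0}\rho^{\,n-1-p}\,d\rho<\infty$ from $p<n$ together with the good-radius choice. With this correction the argument is complete and, in the regime $n-1<p<n$, actually more careful than the paper's.
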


This proposition can be used in combination with Lebesgue's lemma to obtain a homotopy which remains close to $ \varphi $ in $ W^{1,p} $.
Indeed, choosing $ r $ sufficiently small, depending on $ \varphi $, we may ensure that $ \norm{\varphi}_{W^{1,p}(B_{2r}(x))} $ is as small as we want, uniformly with respect to $ r $.
Since $ \norm{\psi}_{W^{1,p}(B_{2r}(x))} \leq \norm{\varphi}_{W^{1,p}(B_{2r}(x))} + \norm{\varphi-\psi}_{W^{1,p}(\S^n)} $, assuming in addition that $ \norm{\varphi-\psi}_{W^{1,p}(\S^n)} $ is small, we can make $ \sup_{0 \leq t \leq 1} \norm{\varphi - H_{t}}_{W^{1,p}(\S^n)} $ as small as we want. 
This yields the following corollary.

\begin{corollary}
\label{corollary:small_homotopies}
	Let $ \varphi \in C^{\infty}(\S^n,\n) $ and $p<n$.
	For every $ \varepsilon > 0 $, there exists $ r > 0 $ sufficiently small, depending on $ \varphi $, and there exists $ \delta > 0 $ such that, for every $ x \in \S^n $ and every $ \psi \in C^{\infty}(\S^n,\n) $ homotopic to $ \varphi $ and satisfying $ \varphi = \psi $ on $ \S^n \setminus B_r(x)  $ and $ \norm{\varphi-\psi}_{W^{1,p}(\S^n)} \leq \delta $, there exists a homotopy $ H \in C^{\infty}(\S^n\times[0,1],\n)$ from $ \varphi $ to $ \psi $ such that
	\[
		\sup_{0 \leq t \leq 1} \norm{\varphi - H_{t}}_{W^{1,p}(\S^n)}
		\leq
		\varepsilon\text{.}
	\]
\end{corollary}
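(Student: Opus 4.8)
The plan is to build $H$ by concatenating three homotopies, exploiting the subcriticality $p<n$ in two ways: first, rescaling a map defined on $\bar{\mathbb B}^n$ to scale $\lambda$ multiplies its $W^{1,p}$‑seminorm by $\lambda^{(n-p)/p}$, a quantity \emph{increasing} in $\lambda$, so that ``compressing a bubble to a smaller scale'' never costs more than the bubble itself; second, for $g\in C^\infty(\S^{n-1},\n)$ that is null‑homotopic, its radial extension $z\mapsto g(z/\abs z)$ over an annulus has finite $W^{1,p}$‑energy, since $\int_\rho^{r}s^{\,n-1-p}\,ds<\infty$. Fix $x$ and write $B_s=B_s(x)$; I may assume $\norm{\varphi}_{W^{1,p}(B_{2r})}>0$, for otherwise $\varphi\equiv\psi$ on $\S^n$ and $H\equiv\varphi$ does the job. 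For $r$ large one works in normal coordinates after minor modifications, so I assume $B_{2r}$ lies in a chart.

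\emph{Choice of a good radius.} By Fubini there is $r'\in[r,\tfrac32r]$ with $\norm{\varphi}_{W^{1,p}(\partial B_{r'})}^p+\norm{\psi}_{W^{1,p}(\partial B_{r'})}^p\lesssim r^{-1}\bigl(\norm{\varphi}_{W^{1,p}(B_{2r})}^p+\norm{\psi}_{W^{1,p}(B_{2r})}^p\bigr)$. Since $r'>r$ we still have $\varphi=\psi$ on $\S^n\setminus B_{r'}$, with common trace $g\coloneqq\varphi|_{\partial B_{r'}}$ on $\partial B_{r'}$, which is null‑homotopic because it extends by $\varphi|_{\bar B_{r'}}$. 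For $\rho\in(0,r']$ let $\varphi_\rho$ equal $\varphi$ on $\S^n\setminus B_{r'}$, the radial extension $z\mapsto g(z/\abs z)$ on $B_{r'}\setminus B_\rho$, and the dilated copy $z\mapsto\varphi\bigl(\tfrac{r'}{\rho}z\bigr)$ on $B_\rho$; this is continuous since $g$ is the common trace of the three pieces, and $\varphi_{r'}=\varphi$. Letting $\rho$ run from $r'$ down to a small value gives the \emph{first} homotopy, from $\varphi$ to $\varphi_\rho$, supported in $B_{r'}$; the two uses of $p<n$ above, together with the choice of $r'$ (and, for the $L^p$‑part of the estimate, the analogous average bound for $\norm{\varphi}_{L^p(\partial B_{r'})}$), yield $\sup_t\norm{\varphi-\varphi_{\rho(t)}}_{W^{1,p}(\S^n)}\lesssim\norm{\varphi}_{W^{1,p}(B_{2r})}$ with a constant depending only on $n,p$. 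The identical recipe, with the same $g$ (as $\psi|_{\partial B_{r'}}=g$ too), defines $\psi_\rho$ and the \emph{third} homotopy $\psi\rightsquigarrow\psi_\rho$; moreover $\varphi_\rho$ and $\psi_\rho$ agree on $\S^n\setminus B_\rho$, while on $B_\rho$ they are the dilated copies of $\varphi|_{\bar B_{r'}}$ and $\psi|_{\bar B_{r'}}$.

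\emph{The middle homotopy.} Because $\varphi$ and $\psi$ are homotopic and agree outside $B_{r'}$, and $g$ is null‑homotopic, $\varphi|_{\bar B_{r'}}$ and $\psi|_{\bar B_{r'}}$ are homotopic rel $\partial B_{r'}$; fix such a homotopy $\mathcal G\colon\bar B_{r'}\times[0,1]\to\n$, which exists with $\mathcal G_s|_{\partial B_{r'}}\equiv g$. Dilating $\mathcal G_s$ to $B_\rho$ and keeping $\varphi_\rho$ elsewhere gives the middle homotopy from $\varphi_\rho$ to $\psi_\rho$, supported in $B_\rho$, with $\sup_s\norm{\varphi-H_s}_{W^{1,p}(\S^n)}\lesssim\norm{\varphi}_{W^{1,p}(B_{2r})}+\rho^{(n-p)/p}M$, where $M$ depends on $\mathcal G$ (hence on $\varphi,\psi,r$) but not on $\rho$. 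Concatenating the first homotopy, this one, and the reverse of the third, using $\norm{\varphi-\psi}_{W^{1,p}(\S^n)}=\norm{\varphi-\psi}_{W^{1,p}(B_r)}\le\norm{\varphi}_{W^{1,p}(B_{2r})}+\norm{\psi}_{W^{1,p}(B_{2r})}$, and finally smoothing the resulting (piecewise smooth, Lipschitz) map in the standard way, one gets $H\in C^\infty(\S^n\times[0,1],\n)$ with
\[
\sup_{0\le t\le1}\norm{\varphi-H_t}_{W^{1,p}(\S^n)}\le C\bigl(\norm{\varphi}_{W^{1,p}(B_{2r})}+\norm{\psi}_{W^{1,p}(B_{2r})}\bigr)+\rho^{(n-p)/p}M ,
\]
and choosing $\rho$ small enough that $\rho^{(n-p)/p}M\le\norm{\varphi}_{W^{1,p}(B_{2r})}$ finishes the argument with $C=C(n,p)$.

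The main obstacle is the step asserting $\varphi|_{\bar B_{r'}}\simeq\psi|_{\bar B_{r'}}$ rel $\partial B_{r'}$. When $\n$ is simply connected — in particular $\n=\S^2$, which is all the application needs — this is automatic, since the relative class of a filling of $g$ is then an absolute invariant and the two fillings glue to the globally null‑homotopic difference $\varphi\ominus\psi$. For general $\n$, however, $\varphi|_{\bar B_{r'}}$ and $\psi|_{\bar B_{r'}}$ may realise relative classes that differ by the action of $\pi_1(\n)$ on $\pi_n(\n)$, so no homotopy supported near the bubble connects $\varphi_\rho$ to $\psi_\rho$, and one must additionally realise that $\pi_1$‑action — this is the genuinely delicate point, where the interaction of the $\pi_1$‑action with the $W^{1,p}$‑scaling has to be controlled (for instance by inserting, next to the compressed bubble, an auxiliary bubble carrying $[\varphi|_{\bar B_{r'}}]\ominus[\psi|_{\bar B_{r'}}]$, which restores localisability at the price of an extra bubble that must then be disposed of); alternatively the statement is recorded under a based‑homotopy or $\pi_1$‑simplicity hypothesis on $\n$.
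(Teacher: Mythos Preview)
Your argument is essentially the paper's: a three–phase homotopy that compresses the region of disagreement into a tiny ball, runs an arbitrary homotopy there (paying the subcritical factor $\rho^{(n-p)/p}$), and decompresses. The paper packages the compression via a one–parameter family of diffeomorphisms $\Phi_t$ of $\S^n$ (identity outside $B_{2r}$, squeezing $B_r$ to $B_\tau$) and simply precomposes $\varphi$, $G$, $\psi$ with them, whereas you use an explicit ``radial extension on the annulus $+$ dilated copy inside'' recipe, which is why you need the extra Fubini choice of a good slice $r'$ to control the trace $g$; the diffeomorphism formulation avoids that step, but the two constructions are equivalent in substance.

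Two remarks. First, you stop at the Proposition–type bound
\[
\sup_{0\le t\le 1}\norm{\varphi-H_t}_{W^{1,p}(\S^n)}\le C\bigl(\norm{\varphi}_{W^{1,p}(B_{2r})}+\norm{\psi}_{W^{1,p}(B_{2r})}\bigr),
\]
but the Corollary asks for $\le\varepsilon$. The missing step is exactly the short paragraph the paper places before the statement: pick $r$ so small that $\sup_{x\in\S^n}\norm{\varphi}_{W^{1,p}(B_{2r}(x))}\le\varepsilon/(4C)$ (absolute continuity of the integral, uniform in $x$ by compactness of $\S^n$), then set $\delta=\varepsilon/(4C)$ and use $\norm{\psi}_{W^{1,p}(B_{2r})}\le\norm{\varphi}_{W^{1,p}(B_{2r})}+\norm{\varphi-\psi}_{W^{1,p}(\S^n)}$. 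Second, your closing discussion of the $\pi_1(\n)$–action on $\pi_n(\n)$ is to the point: the paper's sentence ``since $\varphi=\psi$ outside of $B_r(x)$, we may assume that $G$ is stationary outside of $B_r(x)$'' hides precisely the same issue, and for general $\n$ it does require an extra hypothesis (e.g.\ $\n$ simply connected, or $\pi_1$ acting trivially on $\pi_n$), which is of course harmless for the intended application $\n=\S^2$.
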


\begin{proof}[Proof of Proposition~\ref{proposition:controlled_homotopies}]
	Let $ G \in C^{\infty}(\S^n\times[0,1],\n)$ be any homotopy connecting $ \varphi $ to $ \psi $ with $G_0 = \varphi$ and $G_1 = \psi$.
	Since $ \varphi = \psi $ outside of $ B_r(x)  $, we may assume that $ G $ is stationary outside of $ B_r(x)  $, i.e., for each $t\in[0,1]$, we have $G_t= \varphi = \psi$ on $\S^n \setminus B_r(x)$.
	This claim can be proved with a by-hand construction, that we sketch below.
	We denote by \( \hat{x} \) the point at the antipode of \( x \).
	Let \( \Psi \colon \S^{n} \to \S^{n} \) be a smooth map such that \( \Psi = \id \) outside of \( B_{r}(x) \), and such that \( \Psi \) maps the annulus \( B_{r}(x) \setminus \overline{B}_{r/2}(x) \) diffeomorphically onto the annulus \( \S^{n} \setminus (\overline{B}_{r}(x) \cup \{\hat{x}\}) \), the circle \( \partial B_{r/2}(x) \) onto \( \{\hat{x}\} \), and the ball \( B_{r/2}(x) \) diffeomorphically onto \( \S^{n} \setminus \{\hat{x}\} \).
	It is readily observed that \( \Psi \sim \id \), through a homotopy stationary outside of \( B_{r}(x) \).
	Therefore, the maps \( u \circ \Psi \) and \( v \circ \Psi \) are homotopic to \( u \) and \( v \) respectively, through a homotopy stationary outside of \( B_{r}(x) \).
	Now, given a homotopy \( G' \) connecting \( u \) to \( v \), a homotopy \( G'' \) connecting \( u \circ \Psi \) to \( v \circ \Psi \) can be constructed by prescribing that \( G'' \) is stationary outside of \( B_{r} \), by letting \( G''_{t} = G'_{t} \circ \Psi \) on \( \overline{B}_{r/2} \) --- which corresponds to rescaling \( G' \) from \( \S^{n} \setminus \{ \hat{x} \}\) to \( B_{r}(x) \) --- and extending smoothly on the annulus \( B_{r} \setminus \overline{B}_{r/2} \).
	This is readily done by combining the observations that (i) \( u \circ \Psi \) and \( v \circ \Psi \) coincide also on \( B_{r} \setminus B_{r/2}(x) \) and are constant on \( \partial B_{r/2}(x) \), and (ii) \( G''_{t} \) is constant on \( \partial B_{r/2}(x) \).
	The required homotopy \( G \) stationary outside of $ B_r(x)  $ is then obtained by patching the three above homotopies, from \( u \) to \( u \circ \Psi \), from \( u \circ \Psi \) to \( v \circ \Psi \), and from \( v \circ \Psi \) to \( v \).
	
	Consider $ \tau > 0 $, which will be chosen sufficiently small at a later stage.
	We are going to rescale $ G $, $ \varphi $, and $ \psi $ from $ B_r(x)  $ to a smaller ball $ B_\tau(x) $, while keeping them unchanged outside of $ B_{2r}(x) $.
	More specifically, let $ \brac{\Phi_{t}}_{0 \leq t \leq 1} $ be a family of smooth diffeomorphisms of $ \S^n $ such that $ \Phi_{t} = \id $ outside of $ B_{2r}(x) $ and such that, on $ B_{2r}(x) $, in the local chart given by the exponential map around $ x $, $ \Phi_{t} $ is expressed as
	\[
		\begin{cases}
			\frac{rx}{\brac{1-t}r+t\tau} & \text{if $ \abs{x} \leq \brac{1-t}r+t\tau $,} \\
			\frac{x}{\abs{x}}\brac{\frac{r}{2r-\brac{1-t}r-t\tau}\brac{\abs{x}-\brac{1-t}r-t\tau}+r} & \text{if $ \brac{1-t}r+t\tau \leq \abs{x} \leq 2r $.}
		\end{cases}
	\]
	We define $ H \in C^{\infty}(\S^n\times[0,1],\n)$ by
	\[
		H_{t} \coloneqq
		\begin{cases}
			\varphi \circ \Phi_{3t} & \text{if $ 0 \leq t \leq \frac{1}{3} $,} \\
			G_{3\brac{t-1/3}} \circ \Phi_{1} & \text{if $ \frac{1}{3} \leq t \leq \frac{2}{3} $,} \\
			\psi \circ \Phi_{1-3\brac{t-2/3}} & \text{if $ \frac{2}{3} \leq t \leq 1$.}
		\end{cases}
	\]
	Of course, $ H $ is a homotopy from $ \varphi $ to $ \psi $.
	It remains to show that, if $ \tau > 0 $ is suitably small, then $ H $ satisfies the required estimate.

	For $ 0 \leq t \leq \frac{1}{3} $, we note that $\varphi - H_t =0$ outside $B_{2r}(x)$. We readily obtain bounds on the Jacobian and the derivatives of $ \Phi_{t} $, so that the change of variable theorem combined with $n-p>0$ implies that
	\[
			\norm{\varphi - H_{t}}_{W^{1,p}(\S^n)}
		\leq
		\norm{\varphi}_{W^{1,p}(B_{2r}(x))} + \norm{\varphi \circ \Phi_{3t}}_{W^{1,p}(B_{2r}(x))}
		\aleq \norm{\varphi}_{W^{1,p}(B_{2r}(x))}\text{.}
	\]
	Similarly, for $ \frac{2}{3} \leq t \leq 1 $, we have
	\[
		\norm{\varphi - H_{t}}_{W^{1,p}(\S^n)}
		\leq
		\norm{\varphi}_{W^{1,p}(B_{2r}(x))} + \norm{\psi \circ \Phi_{3t}}_{W^{1,p}(B_{2r}(x))}
		\lesssim
		\norm{\varphi}_{W^{1,p}(B_{2r}(x))} + \norm{\psi}_{W^{1,p}(B_{2r}(x))}\text{.}
	\]
	Concerning $ \frac{1}{3} \leq t \leq \frac{2}{3} $, we estimate
	\[
	\begin{split}
		\norm{\varphi - H_{t}}_{W^{1,p}(\S^n)}
		&\leq
 		\norm{\varphi}_{W^{1,p}(B_{2r}(x))} + \norm{G_{3\brac{t-1/3}} \circ \Phi_{1}}_{W^{1,p}(B_{2r}(x))}\\
 		&\lesssim
  		\norm{\varphi}_{W^{1,p}(B_{2r}(x))} + \norm{G_{3\brac{t-1/3}}}_{W^{1,p}(B_{2r}(x) \setminus B_r(x)) } + \tau^{\frac{n-p}{p}}\norm{G_{3\brac{t-1/3}}}_{W^{1,p}(B_{2r}(x))}\text{.}
	\end{split}
	\]
	Since the homotopy $ G $ has been assumed to be stationary outside of $ B_r(x)  $, we know that $ \norm{G_{3\brac{t-1/3}}}_{W^{1,p}(B_{2r}(x) \setminus B_r(x))} = \norm{\varphi}_{W^{1,p}(B_{2r}(x)\setminus B_r(x) )} $.
	On the other hand, by compactness, we have
	\[
		\sup_{0 \leq t \leq 1} \norm{G_{t}}_{W^{1,p}(B_{2r}(x))}
		\leq
		C_1
	\]
	for some possibly large constant $ C_1 > 0 $.
	We may assume that either \( \norm{\varphi}_{W^{1,p}(B_{2r}(x))} \neq 0 \) or \( \norm{\psi}_{W^{1,p}(B_{2r}(x))} \neq 0 \).
	Indeed, if \( \norm{\varphi}_{W^{1,p}(B_{2r}(x))} = 0 = \norm{\psi}_{W^{1,p}(B_{2r}(x))} \), this implies that both \( \varphi \) and \( \psi \) are identically zero --- note that this may only happen if \( 0 \in \n \) --- and we may directly conclude by choosing \( H \) to be constantly zero.
	As $ p < n $, we may therefore choose $ \tau > 0 $ sufficiently small, depending on $ C_1 $, so that
	\[
		\tau^{\frac{n-p}{p}}\norm{G_{3\brac{t-1/3}}}_{W^{1,p}(B_{2r}(x))}
		\leq
		\norm{\varphi}_{W^{1,p}(B_{2r}(x))} + \norm{\psi}_{W^{1,p}(B_{2r}(x))}
		\quad
		\text{for every $ \frac{1}{3} \leq t \leq \frac{2}{3} $.}
	\]
	Hence, we deduce that
	\[
		\norm{\varphi - H_{t}}_{W^{1,p}(\S^n)}
		\lesssim
		\norm{\varphi}_{W^{1,p}(B_{2r}(x))} + \norm{\psi}_{W^{1,p}(B_{2r}(x))}
		\quad
		\text{for every $ \frac{1}{3} \leq t \leq \frac{2}{3} $.}
	\]
	This concludes the proof.
\end{proof}

In Corollary~\ref{corollary:small_homotopies}, both the $ \delta > 0 $ controlling $ \norm{\varphi-\psi}_{W^{1,p}(\S^n)} $ \emph{and} the $ r > 0 $ depend on $ \varepsilon $.
A very natural question is whether or not one may find a homotopy $ H $ so that $ \sup_{0 \leq t \leq 1} \norm{\varphi - H_{t}}_{W^{1,p}(\S^n)} $ is controlled only by $ \norm{\varphi-\psi}_{W^{1,p}(\S^n)} $.
More precisely, we formulate the following open question (cf. \cite[Problem, p.11]{MazowieckaStrzelecki}).

\begin{openproblem}
\label{open_problem:small_homotopy}
	Let $ \varphi \in C^{\infty}(\S^n,\n) $.
	Does there exist some $ r > 0 $, possibly depending on $ \varphi $, such that for every $ x \in \S^n $ and every $ \psi \in C^{\infty}(\S^n,\n) $ homotopic to $ \varphi $ and satisfying $ \varphi = \psi $ on $ \S^n \setminus B_r(x)  $, there exists a homotopy $ H \in C^{\infty}(\S^n\times[0,1],\n)$ from $ \varphi $ to $ \psi $ such that
	\[
		\sup_{0 \leq t \leq 1} \norm{\varphi - H_{t}}_{W^{1,p}(\S^n)}
		\leq
		\omega\brac{\norm{\varphi-\psi}_{W^{1,p}(\S^n)}}\text{,}
	\]
	where $ \omega $ is a modulus of continuity satisfying $ \omega\brac{t} \to 0 $ as $ t \to 0 $.
\end{openproblem}

One may expect $ \omega $ to be linear in $ t $, but any modulus of continuity would already be of interest.
The question is already interesting for maps $ \S^2 \to \S^2 $.

\section{Proof of the generic non-uniqueness}\label{s:proof}
\begin{proof}[Proof of \Cref{th:main}]
Fix $\eps>0$ and $\varphi \in C^\infty(\S^2,\S^2)$. 
We note first that, by \Cref{th:ALuniqueness} combined with H\"{o}lder's inequality, we may find another mapping $\varphi_0 \in C^{\infty}(\S^2,\S^2)$ which admits exactly one energy minimizer $u_0\colon B^3\to\S^2$ among all maps having boundary datum $\varphi_0$, and such that \(\varphi_{0} \) differs from $\varphi$ only on a set $B_\frac\eps2(x_0)$ for some $x_0\in\S^2$ and is such that
\begin{equation}\label{eq:phiphi0difference}
\norm{\varphi - \varphi_0}_{W^{1,p}(\S^2)} <\frac{\eps}{2}\text{.}
\end{equation}
We recall that, combining the regularity result \cite[Theorem II]{SU1} with the boundary regularity \cite[Theorem 2.7]{SU2} of Schoen--Uhlenbeck, $u_0$ can have only a finite number of singularities; let us denote this number by $M = \#\sing u$ (possibly $M=0$).

Next, we apply \Cref{corollary:small_homotopies} to $\varphi_0\in C^\infty(\S^2,\S^2)$. We obtain the existence of a $\delta=\delta(\eps)>0$ and an $r=r(\varphi_0,\eps)>0$ such that for any $\psi\in C^{\infty}(\S^2,\S^2)$ that differs from $\varphi_0$ only on the set $B_r(x_0)$ and such that $\norm{\varphi_0-\psi}_{W^{1,p}(\S^2)}<\delta$, there exists a homotopy $H\in C^\infty(\S^2 \times [0,1],\S^2)$ with
\begin{equation}\label{eq:specialhomotopy}
 \sup_{0\le t \le 1} \norm{\varphi_0 - H_t}_{W^{1,p}(\S^2)}<\frac \eps 2.
\end{equation}
Let $\eps_1 \coloneqq \min\{\delta,r,\frac \eps 2\}$. 
By \cite[Theorem 2.3.1]{MazowieckaPhD}, we construct $\varphi_1 \in C^{\infty}(\S^2,\S^2)$ with the properties:
\begin{enumerate}
 \item $\deg \varphi_0 = \deg \varphi_1$;
 \item $\|\varphi_0 - \varphi_1\|_{W^{1,p}}<\eps_1$ and $\varphi_0 = \varphi_1$ except on $B_{\eps_1}(x)$ for some point $x\in\S^2$;
 \item $\varphi_1$ admits only one energy minimizer $u_1\colon B^3 \to \S^2$ having at least $M + 1$ singularities.
\end{enumerate}
To be precise, the statement \cite[Theorem 2.3.1]{MazowieckaPhD} gives only that $\mathcal H^2(\{x\in\S^2\colon \varphi_0(x) \neq \varphi_1(x)\})<\eps_1$, but following the lines of the proof, we may deduce that $\varphi_0 = \varphi_1$ except on $B_{\eps_1}(x)$ for some point $x\in\S^2$.

Now, let us take the homotopy $H_t$ between $\varphi_0$ and $\varphi_1$ constructed in \Cref{corollary:small_homotopies}. Let
\[
\begin{split}
 \tau \coloneqq \sup\{t\in[0,1]\colon &\text{ each energy minimizer with boundary datum $H_t$}\\
 &\text{ has at most $M$ singular points in } B^3\}\text{.}
 \end{split}
\]
We argue like in \cite[Remark 4.1]{MazowieckaStrzelecki} (which is a modified argument from \cite[Section 5]{HardtLin89}).
For the convenience of the reader, we state here the main lines of the reasoning. First, we note that from the Stability Theorem \cite{HardtLin89}, see also \cite[Theorem 8.9]{MMS}, we have $\tau\in(0,1)$.

Now take $s_i \nearrow \tau$ and a sequence of minimizing harmonic maps $u_i\in W^{1,2}(B^3,\S^2)$ with $u_i\big\rvert_{\partial B^3} = H_{s_i}$ and $\# \sing u_i \le M$. 
Let us also take $t_i \searrow \tau$ and a sequence of minimizing harmonic maps $v_i\in W^{1,2}(B^3,\S^2)$ with $v_i\big\rvert_{\partial B^3} = H_{t_i}$ and $\# \sing v_i > M$. 
Since $\sup_i \brac{[H_{s_i}]_{W^{1,2}(\S^2)}+ [H_{t_i}]_{W^{1,2}(\S^2)}} <\infty$, we may deduce from the strong convergence of minimizers, see \cite[Theorem 1.2 (4)]{AL88} (see also \cite[Theorem 6.1 (3)]{MMS}), that up to a subsequence we have
\[
\begin{split}
 u_i &\to u \quad \text{ strongly in } W^{1,2}(B^3,\S^2)\text{,}\\
 v_i &\to v \quad \text{ strongly in } W^{1,2}(B^3,\S^2)\text{,}
\end{split}
 \]
and both $u$ and $v$ are energy minimizers with $u\big\rvert_{\partial B^3} = v\big\rvert_{\partial B^3} = H_\tau$. 
We claim that $\#\sing u \le M$. 
Indeed, assume on the contrary that $\# \sing u >M$. 
Then, by \cite[Theorem 1.8 (2)]{AL88} (see also \cite[Theorem 2.10]{MMS}), we would obtain that for each $y\in \sing u$ and for sufficiently large $i$, there would exist $y_i \in \sing u_i$ with $y_i \to y$ as $i\to \infty$, a contradiction.

Moreover, $\#\sing v > M$. 
To see this, let us again assume by contradiction that $\#\sing v \le  M$. 
Let now $z_{i,j}\in \sing v_i$ for $j\in\{1,\ldots,M+1\}$ be distinct singular points of $v_i$. 
Now let us observe that for sufficiently large $i$, we know that that $H_{t_i}$ and $H_\tau$ are close in $C^\infty$. 
Hence, by uniform boundary regularity~\cite[Theorem 1.10 (2)]{AL88} (see also \cite[Theorem 7.4]{MMS}), there is a uniform neighborhood of the boundary $\partial B^3$ which contains no singularities of $v$ and $v_i$, say $\dist(z,\partial B^3) \ge \lambda > 0$ for any $z\in \bigcup_i\sing v_i \cup \sing v$. 
Since singular points converge to singular points, we deduce from \cite[Theorem 1.8 (1)]{AL88} (see also \cite[Theorem 2.5]{MMS}) that for each $j$, we have $z_{i,j}\to z_j$ as $i\to \infty$ and $z_j\in\#\sing v$. 
The only possibility for $\# \{z_1,\ldots, z_{M+1}\}< M+1$ is that two singularities of $v_i$ converge to the same singularity of $v$. 
This, however, is impossible, because by the uniform distance between singularities \cite[Theorem 2.1]{AL88} (see also \cite[Theorem 2.12]{MMS}), there exists a universal constant $C$ (independent of the minimizer) such that no singularity can occur next to $z_{i,j}$ at a distance $C \,\dist(z_{i,j},\partial B^3) \ge C\lambda$.

Hence, $H_\tau \colon \S^2\to\S^2$ serves as a boundary condition for at least two minimizers $u$ and $v$ having a different number of singularities. Combining \eqref{eq:specialhomotopy} with \eqref{eq:phiphi0difference}, we obtain
\[
 \norm{\varphi - H_\tau}_{W^{1,p}(\S^2)} \le \norm{\varphi - \varphi_0}_{W^{1,p}(\S^2)} + \norm{\varphi_0 - H_\tau}_{W^{1,p}(\S^2)} < \frac{\eps}{2} + \eps_1 \leq \eps.
\]
This finishes the proof.
\end{proof}

\bibliographystyle{abbrv}%

\end{document}